\newcommand{\rmnum}[1]{\romannumeral #1}
\newcommand{\Rmnum}[1]{\expandafter\@slowromancap\romannumeral #1@}
\newtheorem{theorem}{Theorem}
\newtheorem{corollary}{Corollary}
\newtheorem{proposition}{Proposition}
\newtheorem{lemma}{Lemma}
\newtheorem{remark}{Remark}
\newtheorem{definition}{Definition}
\def \diag{\textup{diag}}
\def \T{\textup{T}}
\def\rank{\textup{rank}}
\begin{document}
\title{A characterization of generalized cospectrality of rooted graphs with applications in graph reconstruction}
\author{Wei Wang$^a$\thanks{Corresponding author. Email address: wangwei.math@gmail.com}
	\quad Wenqiang Wen$^a$
	 \quad Songlin Guo$^b$
\\
{\footnotesize $^{\rm a}$ School of Mathematics, Physics and Finance, Anhui Polytechnic University, Wuhu 241000, P. R. China}\\
{\footnotesize $^{\rm b}$ School of General Education, Wanjiang University of Technology, Ma'anshan, 243031, P. R. China}\\
}
\date{}

\maketitle
\begin{abstract}
Extending a classic result of Johnson and Newman, this paper provides a matrix characterization for two generalized cospectral graphs with a pair of generalized cospectral vertex-deleted subgraphs.  As an application, we present a new condition for the reconstructibility of a graph. In particular, we show	 that  a graph $G$ with at least three vertices is reconstructible if there exists a vertex-deleted subgraph that is almost controllable and has a nontrivial automorphism.	
\end{abstract}

	\noindent\textbf{Keywords:} generalized spectrum; generalized cospectrality;  rooted-cospectrality; regular orthogonal matrix;  graph reconstruction;  almost controllable graph. \\
	
	\noindent\textbf{Mathematics Subject Classification:} 05C50

\section{Introduction}
Let $G$ be an $n$-vertex graph  with adjacency matrix $A(G)$. The spectrum of $G$ refers to the multiset of  eigenvalues of $A(G)$. Two graphs $G$ and $H$ are \emph{cospectral} if they share the same spectrum. It is known that if $G$ and $H$ are cospectral then there exists an orthogonal matrix $Q$ such that $Q^\T A(G) Q=A(H)$. 

We are interested in two kinds of enhancements of the ordinary cospectrality: rooted-cospectrality and generalized cospectrality.  Let $(G,u)$ be a rooted graph with $u$ being the root vertex. We say two rooted graphs $(G,u)$ and $(H,v)$ are cospectral if (1) $G$ and $H$ are cospectral and (2) $G-u$ and $H-v$ are also cospectral. It turns out that the rooted-cospectrality of graphs can be characterized by specific orthogonal matrices as described in the following theorem. Without loss of generality, we may assume the root vertices are labeled as the last vertices in graphs.

\begin{theorem}[\cite{schwenk,liu2020}]
Let $G$ and $H$ be two $n$-vertex graphs with vertex set $\{u_1,\ldots,u_n\}$  and $\{v_1,\ldots,v_n\}$. Then the following are equivalent:

\noindent\textup{(\rmnum{1})}  $(G,u_n)$ and $(H,v_n)$ are cospectral.
	
\noindent\textup{(\rmnum{2})} There exists an orthogonal matrix of the form 
$
	\begin{pmatrix}
		Q&O\\
		O&1
	\end{pmatrix}
$
such that 
\begin{equation*}
	\begin{pmatrix}
	Q^\T&O\\
	O&1
\end{pmatrix}A(G)	\begin{pmatrix}
Q&O\\
O&1
\end{pmatrix}=A(H).
\end{equation*}
\end{theorem}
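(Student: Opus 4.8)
The plan is to prove the equivalence by exploiting the block structure dictated by keeping the root vertices fixed as the last coordinate.

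For the direction (ii) $\Rightarrow$ (i), suppose such a block-diagonal orthogonal matrix exists. Writing $A(G) = \begin{pmatrix} B & b \\ b^\T & 0\end{pmatrix}$ and $A(H) = \begin{pmatrix} C & c \\ c^\T & 0\end{pmatrix}$, the conjugation relation forces $Q^\T B Q = C$ (so $G-u_n$ and $H-v_n$ are cospectral, indeed orthogonally similar) and $Q^\T b = c$; the full matrices $A(G)$ and $A(H)$ are orthogonally similar via $\mathrm{diag}(Q,1)$, so $G$ and $H$ are cospectral. Both defining conditions of rooted-cospectrality follow immediately.

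For the direction (i) $\Rightarrow$ (ii), the standard tool is the theory of orthogonal matrices intertwining symmetric matrices together with a pasting argument. Since $G-u_n$ and $H-v_n$ are cospectral, there is an orthogonal $Q_0$ with $Q_0^\T B Q_0 = C$. The difficulty is that this $Q_0$ need not satisfy $Q_0^\T b = c$, so it cannot be extended naively. I would instead argue spectrally: decompose $\mathbb{R}^{n-1}$ into eigenspaces of $B$ (equivalently of $C$, after identifying via $Q_0$), and show that the projections of $b$ onto the eigenspace of $B$ for eigenvalue $\lambda$ and of $c$ onto the eigenspace of $C$ for $\lambda$ have the same norm, for every $\lambda$. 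This norm equality is exactly what the cospectrality of the full graphs $G$ and $H$ encodes, because the characteristic polynomial of $A(G)$ can be expanded (e.g. via the Schur complement / Jacobi's formula) as $\phi(A(G),x) = x\,\phi(B,x) - \sum_{\lambda} \|\Pi_\lambda b\|^2 \prod_{\mu \neq \lambda}(x-\mu)$, and similarly for $H$; equating these polynomials and using $\phi(B,x)=\phi(C,x)$ gives the per-eigenvalue norm identities. Once these norms match, one builds within each eigenspace an orthogonal map sending $\Pi_\lambda b$ to (the image of) $\Pi_\lambda c$, assembles them into a single orthogonal $Q$ on $\mathbb{R}^{n-1}$ with $Q^\T B Q = C$ and $Q^\T b = c$, and then $\mathrm{diag}(Q,1)$ does the job.

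The main obstacle is precisely the simultaneous control of the similarity on the deleted subgraph and the matching of the attachment vectors: one must resist the temptation to fix $Q_0$ first and instead build $Q$ eigenspace-by-eigenspace so that both requirements are met at once. The bookkeeping with eigenvalues of multiplicity greater than one — choosing orthonormal bases of each eigenspace compatibly on the $G$ side and the $H$ side — is routine but must be done carefully; the genuine mathematical content is the norm identity $\|\Pi_\lambda b\| = \|\Pi_\lambda c\|$, which I would extract cleanly from the walk-generating-function or characteristic-polynomial expansion above. I expect the whole argument to be short once this identity is in hand.
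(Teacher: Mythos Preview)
Your argument is correct. Note, however, that the paper does not supply its own proof of this theorem: it is quoted as a known result from \cite{schwenk,liu2020}. That said, your spectral-decomposition strategy---extracting the per-eigenspace norm identities $\|P_i^\T b\|=\|R_i^\T c\|$ from the characteristic-polynomial expansion (Lemma~\ref{basic}(\rmnum{1}) in the paper) and then building $Q$ eigenspace-by-eigenspace so that $Q^\T BQ=C$ and $Q^\T b=c$ hold simultaneously---is exactly the one-vector specialization of the argument the paper gives for Proposition~\ref{rst} (and hence Theorem~\ref{main}). There the same construction is carried out for the pair of vectors $b$ and $e$ at once, which is why the paper must additionally establish the inner-product identity $\langle P_i^\T b,P_i^\T e\rangle=\langle R_i^\T c,R_i^\T e\rangle$ via Lemma~\ref{tl}; in your setting that extra step is unnecessary, and the rest of your outline goes through without change.
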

We say two graphs $G$ and $H$ are \emph{generalized cospectral} if $G$ and $H$ are cospectral with cospectral complement. Similar to  rooted-cospectrality, generalized cospectrality can be characterized by a special kind of orthogonal matrices. An orthogonal matrix is \emph{regular} if each row sum is 1. 
\begin{theorem}[\cite{johnson1980JCTB}]
	Let $G$ and $H$ be two graphs. Then the following are equivalent:
	
\noindent\textup{(\rmnum{1})} $G$ and $H$ are generalized cospectral.

\noindent\textup{(\rmnum{2})} There exists a regular orthogonal matrix $Q$ such that $Q^\T A(G) Q=A(H)$.
\end{theorem}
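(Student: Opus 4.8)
The plan is to prove the two implications separately; (\rmnum{2})$\Rightarrow$(\rmnum{1}) is a short computation, while (\rmnum{1})$\Rightarrow$(\rmnum{2}) is where the real content lies. For (\rmnum{2})$\Rightarrow$(\rmnum{1}), suppose $Q$ is a regular orthogonal matrix with $Q^\T A(G)Q=A(H)$. Then $A(G)$ and $A(H)$ are similar, so $G$ and $H$ are cospectral. Writing $\mathbf{1}$ for the all-ones vector, regularity of $Q$ means precisely $Q\mathbf{1}=\mathbf{1}$, and since $Q^{-1}=Q^\T$ this also forces $Q^\T\mathbf{1}=\mathbf{1}$; hence, with $J=\mathbf{1}\mathbf{1}^\T$, we get $Q^\T J Q=(Q^\T\mathbf{1})(Q^\T\mathbf{1})^\T=J$. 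Since $A(\bar G)=J-I-A(G)$, conjugating by $Q$ yields $Q^\T A(\bar G)Q=J-I-A(H)=A(\bar H)$, so $\bar G$ and $\bar H$ are cospectral as well, and $G,H$ are generalized cospectral.

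For (\rmnum{1})$\Rightarrow$(\rmnum{2}), the first step is to convert generalized cospectrality into concrete spectral data. Let $A(G)=\sum_{i=1}^{m}\mu_iP_i$ be the spectral decomposition, with $\mu_1>\cdots>\mu_m$ the distinct eigenvalues, $P_i$ the orthogonal projection onto the $\mu_i$-eigenspace $V_i$, and let $A(H)=\sum_{i=1}^{m}\mu_iP_i'$, with $\mu_i$-eigenspace $V_i'$, be the analogue for $H$; the eigenvalues and multiplicities coincide because $G$ and $H$ are cospectral, so $\dim V_i=\dim V_i'$ for each $i$. Applying the matrix determinant lemma to $\det\!\big((x+1)I+A(G)-\mathbf{1}\mathbf{1}^\T\big)$ and using $\mathbf{1}^\T\big((x+1)I+A(G)\big)^{-1}\mathbf{1}=\sum_i\|P_i\mathbf{1}\|^2/(x+1+\mu_i)$, one finds that the characteristic polynomial of $\bar G$ equals $(-1)^n\phi_G(-x-1)$ times $1-\sum_i\|P_i\mathbf{1}\|^2/(x+1+\mu_i)$. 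Reading off the residues of this rational function shows that, once $\phi_G$ is known, $\phi_{\bar G}$ determines and is determined by the numbers $\|P_i\mathbf{1}\|^2$. Consequently $G$ and $H$ are generalized cospectral if and only if $\dim V_i=\dim V_i'$ and $\|P_i\mathbf{1}\|=\|P_i'\mathbf{1}\|$ for every $i$.

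The second step is to assemble $Q$ blockwise along the eigenspaces. For each $i$, since $\dim V_i=\dim V_i'$ and $\|P_i\mathbf{1}\|=\|P_i'\mathbf{1}\|$, there is a linear isometry $Q_i\colon V_i\to V_i'$ with $Q_i(P_i\mathbf{1})=P_i'\mathbf{1}$: if $P_i\mathbf{1}\ne 0$, send the unit vector $P_i\mathbf{1}/\|P_i\mathbf{1}\|$ to $P_i'\mathbf{1}/\|P_i'\mathbf{1}\|$ and extend by matching orthonormal bases of the orthogonal complements inside $V_i$ and $V_i'$; if $P_i\mathbf{1}=0$ (hence $P_i'\mathbf{1}=0$), take any isometry. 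Since $\mathbb{R}^n=\bigoplus_iV_i=\bigoplus_iV_i'$ are orthogonal direct sums, the operator $Q$ defined by $Qv=\sum_iQ_i(P_iv)$ is orthogonal; it satisfies $Q\mathbf{1}=\sum_iQ_i(P_i\mathbf{1})=\sum_iP_i'\mathbf{1}=\mathbf{1}$, so $Q$ is regular; and because $Q$ maps $V_i$ onto $V_i'$ for each $i$, it intertwines $A(G)$ and $A(H)$, giving $Q^\T A(H)Q=A(G)$, equivalently $(Q^\T)^\T A(G)(Q^\T)=A(H)$ with $Q^\T$ again regular orthogonal.

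I expect the main obstacle to be the first step of the hard direction: recognizing that the content of the generalized spectrum beyond the ordinary spectrum is exactly the collection $\{\|P_i\mathbf{1}\|^2\}$ (equivalently, the main eigenvalues and main angles of $G$), and proving the stated equivalence via the determinant identity for $\phi_{\bar G}$. Once this reduction is in place the matrix $Q$ is built essentially canonically, and verifying that it is orthogonal, regular, and conjugates $A(G)$ to $A(H)$ is immediate from the orthogonality of the eigenspace decompositions.
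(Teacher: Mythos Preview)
The paper does not supply its own proof of this theorem; it is quoted as a known result of Johnson and Newman with a citation and no argument. Your proof is correct, and in fact your approach is exactly the template the paper later uses to prove its genuinely new Proposition~\ref{rst} (the two-vector extension): decompose $A(G)$ and $A(H)$ spectrally, use the formula for $\chi(\overline{G};x)$ (your determinant identity is the paper's Lemma~\ref{basic}(\rmnum{2})) to extract $\|P_i\mathbf{1}\|=\|P_i'\mathbf{1}\|$, pick an isometry on each eigenspace sending $P_i\mathbf{1}$ to $P_i'\mathbf{1}$, and assemble $Q$ blockwise. The only cosmetic difference is that the paper writes the spectral decomposition via orthonormal-basis matrices $P_i$ (so the projection is $P_iP_i^\T$) rather than via the projections themselves, and correspondingly builds $Q$ as $(P_1Q_1^\T,\ldots,P_mQ_m^\T)(R_1,\ldots,R_m)^\T$; this is the same operator you construct.
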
 

The main aim of this paper is to unify the above two theorems. We say two rooted graphs $(G,u)$ and $(H,v)$ are generalized cospectral if (1) $G$ and $H$ are generalized cospectral and (2) $G-u$ and $H-v$ are also generalized cospectral. The main result of this paper is the following theorem.
\begin{theorem}\label{main}
	Let $G$ and $H$ be two $n$-vertex graphs with vertex set $\{u_1,\ldots,u_n\}$  and $\{v_1,\ldots,v_n\}$. Then the following are equivalent:
	
	\noindent\textup{(\rmnum{1})}  $(G,u_n)$ and $(H,v_n)$ are generalized cospectral, i.e., four graphs $G,\overline{G},G-u_n$ and $ \overline{G-u_n}$ are cospectral with $H,\overline{H},H-v_n$ and  $\overline{H-v_n}$, respectively. 
	
	\noindent\textup{(\rmnum{2})}There exists a regular orthogonal matrix of the form 
	$
	\begin{pmatrix}
		Q&O\\
		O&1
	\end{pmatrix}
	$
	such that 
	\begin{equation*}
		\begin{pmatrix}
			Q^\T&O\\
			O&1
		\end{pmatrix}A(G)	\begin{pmatrix}
			Q&O\\
			O&1
		\end{pmatrix}=A(H).
	\end{equation*}
\end{theorem}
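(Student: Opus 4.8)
The plan is to establish the substantive implication $(\rmnum{1})\Rightarrow(\rmnum{2})$; the reverse implication is an immediate block computation, since if $\widehat Q=\bigl(\begin{smallmatrix}Q&O\\O&1\end{smallmatrix}\bigr)$ is regular orthogonal with $\widehat Q^\T A(G)\widehat Q=A(H)$, then $Q$ is itself regular orthogonal, the $(1,1)$-block of this relation reads $Q^\T A(G-u_n)Q=A(H-v_n)$, and applying Theorem~2 (Johnson--Newman) to $\widehat Q$ and to $Q$ gives all four cospectralities in $(\rmnum{1})$. For $(\rmnum{1})\Rightarrow(\rmnum{2})$ I write
\begin{equation*}
A(G)=\begin{pmatrix}A&b\\ b^\T&0\end{pmatrix},\qquad A(H)=\begin{pmatrix}A'&b'\\ (b')^\T&0\end{pmatrix},
\end{equation*}
where $A=A(G-u_n)$, $A'=A(H-v_n)$, and $b,b'\in\{0,1\}^{n-1}$ are the adjacency vectors of the roots. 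It suffices to build an $(n-1)\times(n-1)$ orthogonal matrix $Q$ with $Qe=e$ (so that $\bigl(\begin{smallmatrix}Q&O\\O&1\end{smallmatrix}\bigr)$ is regular orthogonal), $Q^\T A Q=A'$, and $Q^\T b=b'$; here $e$ is the all-ones vector of length $n-1$. Since $A$ and $A'$ are cospectral (part of $(\rmnum{1})$), a standard argument with orthogonal similarity — matching the orthogonal projections of $e$ and $b$ onto the eigenspaces of $A$ against those of $e$ and $b'$ onto the eigenspaces of $A'$, eigenvalue by eigenvalue, and then extending the resulting isometry across the orthogonal complements of the cyclic subspaces they generate — shows that such a $Q$ exists as soon as the three families of ``moment'' identities
\begin{equation*}
e^\T A^k e=e^\T (A')^k e,\qquad e^\T A^k b=e^\T (A')^k b',\qquad b^\T A^k b=(b')^\T (A')^k b'\qquad(k\ge 0)
\end{equation*}
all hold. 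So the entire proof reduces to verifying these three identities from the four cospectrality hypotheses.

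The first identity, $e^\T A^k e=e^\T (A')^k e$, is exactly equality of the walk counts of $G-u_n$ and $H-v_n$, which holds because these two graphs are generalized cospectral. For the third identity I would use the bordered-determinant formula
\begin{equation*}
\phi\bigl(A(G),x\bigr)=x\,\phi(A,x)-b^\T\operatorname{adj}(xI-A)\,b=\phi(A,x)\Bigl(x-\sum_{k\ge 0}\bigl(b^\T A^k b\bigr)x^{-k-1}\Bigr),
\end{equation*}
together with the corresponding formula for $H$. Since $\phi(A(G),x)=\phi(A(H),x)$ (as $G$ and $H$ are cospectral) and $\phi(A,x)=\phi(A',x)$ (as $G-u_n$ and $H-v_n$ are cospectral), cancelling the common factor $\phi(A,x)$ forces $\sum_k(b^\T A^k b)x^{-k-1}=\sum_k((b')^\T (A')^k b')x^{-k-1}$, which is the third family of identities.

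The second identity, $e^\T A^k b=e^\T (A')^k b'$, is the crux, and it is the only place where the cospectrality of the complements $\overline G$ and $\overline H$ is genuinely used. Put $f_G(x)=\sum_k(e^\T A^k e)x^{-k-1}$, $g_G(x)=\sum_k(b^\T A^k b)x^{-k-1}$, $h_G(x)=\sum_k(e^\T A^k b)x^{-k-1}$, with $f_H,g_H,h_H$ defined analogously from $A'$ and $b'$. Computing the walk-generating function $W_G(x)=\sum_k w_k(G)\,x^{-k-1}$, where $w_k(G)$ is the number of walks of length $k$ in $G$, by taking the Schur complement of the $(n,n)$-entry of $xI-A(G)$ yields the identity
\begin{equation*}
W_G(x)=f_G(x)+\frac{\bigl(h_G(x)+1\bigr)^2}{\,x-g_G(x)\,},
\end{equation*}
and likewise for $H$. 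Now $G$ and $H$ being generalized cospectral forces $W_G=W_H$ (cospectral complements are determined by the spectrum together with the walk-generating function), and the equalities $f_G=f_H$, $g_G=g_H$ obtained above then give $\bigl(h_G(x)+1\bigr)^2=\bigl(h_H(x)+1\bigr)^2$, so $h_G+1=\pm(h_H+1)$. Expanding in powers of $x^{-1}$, both $h_G+1$ and $h_H+1$ have constant term $1$, which excludes the minus sign; hence $h_G=h_H$, i.e.\ the second family of identities. With all three families in hand, one assembles $Q$ as in the first paragraph and checks directly that $\bigl(\begin{smallmatrix}Q&O\\O&1\end{smallmatrix}\bigr)$ is the desired regular orthogonal matrix. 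I expect the main obstacle to be precisely this last identity: the appearance of the \emph{square} $(h_G+1)^2$ in the formula for $W_G$ is what makes the argument nonlinear and hence delicate, and comparing behaviour at infinity is essential to fix the sign.
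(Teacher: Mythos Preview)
Your proof is correct and follows essentially the same strategy as the paper's: reduce to constructing an $(n-1)\times(n-1)$ regular orthogonal $Q$ with $Q^\T A Q=A'$ and $Q^\T b=b'$, which in turn reduces (via the spectral decomposition) to matching the three Gram data $\|P_i^\T e\|$, $\|P_i^\T b\|$, $\langle P_i^\T e,P_i^\T b\rangle$ against their counterparts for $A'$---exactly your three moment families. The only difference is in packaging. For the mixed moments the paper computes the characteristic polynomial of the double overgraph $\hat G=(G_1+b)+e^{(n)}$ and arrives at a $2\times2$ determinant whose off-diagonal entry is $1+\sum_i\langle P_i^\T b,P_i^\T e\rangle/(\lambda-\lambda_i)$; you instead compute the walk-generating function of $G$ via the Schur complement and obtain the same quantity as $1+h_G(x)$. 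Both routes land on the identical squared relation $(1+h_G)^2=(1+h_H)^2$. Your sign-resolution (compare constant terms in the $x^{-1}$-expansion, i.e.\ behaviour at infinity) is a bit cleaner than the paper's Lemma~\ref{tl}, which instead argues that the ``minus'' branch can hold for at most $m$ values of $\lambda$ and then passes to the limit $\lambda\to\lambda_i$; but the content is the same.
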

The proof of Theorem \ref{main} will be given in Section 2. We remark that Theorem 3 was  reported by Farrugia \cite{farrugia} in  a different but essentially equivalent form, using the notion of \emph{overgraphs}. However, the original proof presented in \cite{farrugia} contains some inaccuracies, and  the present proof can be regarded as a correction. In Section 3, we apply Theorem \ref{main} in the problem of graph reconstruction. A new condition is obtained for the reconstructibility of  graphs, which improves the previous result of Hong \cite{hong}.

\section{Proof of Theorem \ref{main}}
Let $G$ be a graph with vertex set $\{u_1,\ldots,u_n\}$. Let $b$ be an $n$-dimensional $(0,1)$-vector. Following Farrugia \cite{farrugia}, the \emph{overgraph} $G+b$ is the graph whose adjacency matrix is $$A(G+b)=\begin{pmatrix}A(G)&b\\b^\T& 0\end{pmatrix}.$$

Let $A$ be an $n\times n$ real symmetric matrix and $\lambda_1,\ldots,\lambda_m$ be its distinct eigenvalues with multiplicities $k_1,\ldots, k_m$, respectively. Let $P_i$ be any $n\times k_i$ matrix whose columns consist of an orthonormal basis of $\mathcal{E}_{\lambda_i}(A)$, the eigenspace of $A$ corresponding to $\lambda_i$. Then $A$ has the spectral decomposition 
$$A=\lambda_1 P_1P_1^\T+\cdots+\lambda_m P_mP_m^\T.$$
We note that  $P_iP_i^\T$ is well defined although $P_i$ is not unique. Indeed, if $\tilde{P}_i$ consists of another orthogonal basis of $\mathcal{E}_{\lambda_i}(A)$, then we must have $\tilde{P}_i=P_iQ$ for some orthogonal matrix $Q$, which clearly implies $\tilde{P}_i\tilde{P}^\T_i= P_iP_i^\T$.

For a graph $G$, we use $\chi(G;x)$ to denote the characteristic polynomial of $G$, i.e., $\chi(G;x)=\chi(A(G);x)=\det(xI-A(G))$. We use $e^{(n)}$ (or $e$) to denote the all-ones  vector of dimension $n$.

\begin{lemma}[\cite{crs}]\label{basic}
Let $G$ be an $n$-vertex graph whose adjacency matrix $A$ has spectral decomposition $A=\lambda_1 P_1P_1^\T+\cdots+\lambda_m P_mP_m^\T$. Then 

\noindent\textup{(\rmnum{1})}	$\chi(G+b;x)=\chi(G;x)\left(x-\sum_{i=1}^m \frac{||P_i^\T b||^2}{x-\lambda_i}\right)$
for any  $b\in \{0,1\}^n$. 

\noindent\textup{(\rmnum{2})}  $\chi(\overline{G};x)=(-1)^n\chi(G;-x-1)\left(1-\sum_{i=1}^m \frac{||P_i^\T e||^2}{x+1+\lambda_i}\right).$
\end{lemma}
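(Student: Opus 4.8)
The plan is to derive both identities from the spectral decomposition $A=\sum_{i=1}^m \lambda_i P_iP_i^\T$ together with two standard determinant formulas for structured perturbations. The essential preliminary observation is that the spectral projectors furnish a resolution of the identity, $\sum_{i=1}^m P_iP_i^\T=I_n$, and are mutually annihilating and idempotent ($P_iP_i^\T\cdot P_jP_j^\T=O$ for $i\neq j$, while $(P_iP_i^\T)^2=P_iP_i^\T$). Consequently, for any scalar $x$ avoiding the spectrum we obtain the clean inverse $(xI_n-A)^{-1}=\sum_{i=1}^m (x-\lambda_i)^{-1}P_iP_i^\T$, and likewise $(yI_n+A)^{-1}=\sum_{i=1}^m (y+\lambda_i)^{-1}P_iP_i^\T$. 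Sandwiching either inverse with a vector $v$ produces a quadratic form of the shape $v^\T(xI_n-A)^{-1}v=\sum_i \|P_i^\T v\|^2/(x-\lambda_i)$, which is precisely the kind of sum appearing in both statements.

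For part (\rmnum{1}), I would write $\chi(G+b;x)=\det\begin{pmatrix} xI_n-A & -b\\ -b^\T & x\end{pmatrix}$ and evaluate this bordered determinant via the Schur complement of the leading block. Treating $x$ as outside the spectrum so that $xI_n-A$ is invertible, this gives $\chi(G+b;x)=\det(xI_n-A)\bigl(x-b^\T(xI_n-A)^{-1}b\bigr)$. Substituting the inverse expansion converts $b^\T(xI_n-A)^{-1}b$ into $\sum_i \|P_i^\T b\|^2/(x-\lambda_i)$, while $\det(xI_n-A)=\chi(G;x)$, which delivers the claimed formula.

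For part (\rmnum{2}), the starting point is $A(\overline{G})=J-I_n-A$ with $J=ee^\T$, so that $\chi(\overline{G};x)=\det\bigl((x+1)I_n+A-ee^\T\bigr)$. Setting $y=x+1$ and applying the matrix determinant lemma to the rank-one update $yI_n+A-ee^\T$ yields $\det(yI_n+A)\bigl(1-e^\T(yI_n+A)^{-1}e\bigr)$. Here the quadratic form becomes $\sum_i \|P_i^\T e\|^2/(x+1+\lambda_i)$ exactly as above, and the prefactor is identified through $\det\bigl((x+1)I_n+A\bigr)=(-1)^n\det\bigl((-x-1)I_n-A\bigr)=(-1)^n\chi(G;-x-1)$, reproducing the stated expression.

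The only delicacy is that both determinant identities and the inverse expansions are valid a priori only when $x$ (respectively $y$) avoids the eigenvalues of $A$. I would close this gap by the standard polynomial-identity argument: after clearing the denominators $\prod_i(x-\lambda_i)$, each displayed equation becomes an identity between two polynomials in $x$ that holds on the cofinite set of non-eigenvalues, hence holds identically. Thus no genuine obstacle remains beyond this routine bookkeeping; the substance of the proof is simply recognizing the two perturbation types — a bordered matrix for the overgraph and a rank-one shift for the complement — and feeding in the spectral resolvent.
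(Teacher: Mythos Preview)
Your argument is correct. The paper does not supply its own proof of this lemma; it simply cites it from \cite{crs}. Your derivation via the Schur complement for part (\rmnum{1}) and the matrix determinant lemma for part (\rmnum{2}), combined with the resolvent expansion $(xI-A)^{-1}=\sum_i (x-\lambda_i)^{-1}P_iP_i^\T$, is the standard route and is entirely sound, including the polynomial-continuation remark that extends the identity from generic $x$ to all $x$. In fact the paper itself uses exactly the same Schur-complement/column-operation technique later, in the computation of $\chi(\hat{G};x)$ within the proof of Proposition~\ref{rst}, so your method is fully in line with the paper's approach.
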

Suppose that $\chi(G;x)$ is known, then it is not difficult to see from Lemma \ref{basic} that knowledge of $\chi(G+e;x)$ is equivalent to knowledge of $\chi(\overline{G};x)$. This yields an equivalent definition on generalized cospectrality:
\begin{corollary}\label{gc2}
	Two graphs $G$ and $H$ are generalized cospectral if and only if two graphs $G$ and $G+e$ are cospectral with $H$ and $H+e$, respectively.
\end{corollary}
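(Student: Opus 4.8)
The plan is to read the equivalence straight off Lemma~\ref{basic}, the point being that, once $\chi(G;x)$ is known, both $\chi(\overline{G};x)$ and $\chi(G+e;x)$ are determined by — and determine — the single rational function
\[
W_G(x):=\sum_{i=1}^{m}\frac{\|P_i^\T e\|^2}{x-\lambda_i},
\]
where $\lambda_1,\dots,\lambda_m$ are the distinct eigenvalues of $A(G)$ and $P_1,\dots,P_m$ are orthonormal eigenspace bases. Indeed, Lemma~\ref{basic}(\rmnum{1}) with $b=e$ reads $\chi(G+e;x)=\chi(G;x)\bigl(x-W_G(x)\bigr)$, while Lemma~\ref{basic}(\rmnum{2}) reads $\chi(\overline{G};x)=(-1)^n\chi(G;-x-1)\bigl(1+W_G(-x-1)\bigr)$, after rewriting $\sum_i\|P_i^\T e\|^2/(x+1+\lambda_i)=-W_G(-x-1)$.

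I would then prove both directions at once. Assume $G$ and $H$ are cospectral, i.e.\ $\chi(G;x)=\chi(H;x)$; then $G$ and $H$ share the same distinct eigenvalues $\lambda_1,\dots,\lambda_m$ with the same multiplicities, so $W_H$ (the analogue of $W_G$ for $H$) is supported on the same poles. Dividing the identity from part~(\rmnum{1}) by the nonzero polynomial $\chi(G;x)=\chi(H;x)$ shows that $\chi(G+e;x)=\chi(H+e;x)$ is equivalent to $W_G=W_H$; dividing the identity from part~(\rmnum{2}) by the nonzero polynomial $\chi(G;-x-1)=\chi(H;-x-1)$ shows that $\chi(\overline{G};x)=\chi(\overline{H};x)$ is equivalent to $W_G(-x-1)=W_H(-x-1)$, i.e.\ again to $W_G=W_H$. (Equivalently, comparing residues at the simple poles $\lambda_i$, respectively $-1-\lambda_i$, both conditions reduce to the concrete requirement $\|P_i^\T e\|=\|R_i^\T e\|$ for every $i$, where $R_i$ plays for $H$ the role $P_i$ plays for $G$.) Hence, under the standing hypothesis $\chi(G;x)=\chi(H;x)$, the two conditions $\chi(\overline{G};x)=\chi(\overline{H};x)$ and $\chi(G+e;x)=\chi(H+e;x)$ are equivalent.

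It then remains only to unwind the definitions: ``$G$ and $H$ are generalized cospectral'' means $\chi(G;x)=\chi(H;x)$ together with $\chi(\overline{G};x)=\chi(\overline{H};x)$, while ``$G$ and $G+e$ are cospectral with $H$ and $H+e$'' means $\chi(G;x)=\chi(H;x)$ together with $\chi(G+e;x)=\chi(H+e;x)$; by the previous paragraph these are the same condition, which is exactly the corollary. The proof is essentially formal once Lemma~\ref{basic} is available, so I do not expect a real obstacle; the only point to watch is that the lemma's formulas present the \emph{polynomials} $\chi(G+e;x)$ and $\chi(\overline{G};x)$ as products involving the rational function $W_G$, so the cancellations above must be carried out in the field $\mathbb{Q}(x)$ of rational functions, where $\chi(G;x)$ and $\chi(G;-x-1)$ are indeed nonzero — routine bookkeeping rather than a genuine difficulty.
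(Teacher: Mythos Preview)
Your proof is correct and follows exactly the approach the paper indicates: the paper does not spell out a proof but simply notes that, given $\chi(G;x)$, Lemma~\ref{basic} makes knowledge of $\chi(G+e;x)$ equivalent to knowledge of $\chi(\overline{G};x)$, and you have faithfully expanded this remark by isolating the common rational function $W_G(x)=\sum_i\|P_i^\T e\|^2/(x-\lambda_i)$ that encodes both.
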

We need a technical lemma.
\begin{lemma}\label{tl}
	Let $\lambda_1,\ldots,\lambda_m$ be $m$ distinct real numbers. Let $a_1,a_2,\ldots, a_m,b_1,b_2,\ldots,b_m\in \mathbb{R}$. If 
	\begin{equation}\label{ab2}
		\left(1+\sum_{i=1}^{m}\frac{a_i}{\lambda-\lambda_i}\right)^2=\left(1+\sum_{i=1}^{m}\frac{b_i}{\lambda-\lambda_i}\right)^2 \text{~for~} \lambda\in \mathbb{R}\setminus\{\lambda_1,\ldots,\lambda_m\}
	\end{equation}
	then $a_i=b_i$ for $i\in \{1,\ldots,m\}$.
\end{lemma}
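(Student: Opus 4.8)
The plan is to exploit the fact that equation (\ref{ab2}) is an identity of rational functions, not merely a numerical coincidence at finitely many points. Writing $f(\lambda)=1+\sum_{i=1}^m \frac{a_i}{\lambda-\lambda_i}$ and $g(\lambda)=1+\sum_{i=1}^m \frac{b_i}{\lambda-\lambda_i}$, the hypothesis says $f(\lambda)^2=g(\lambda)^2$ for all $\lambda$ outside the finite set $\{\lambda_1,\dots,\lambda_m\}$, hence $f^2-g^2=(f-g)(f+g)$ vanishes identically on a cofinite subset of $\mathbb{R}$, and therefore vanishes identically as a rational function. Since $\mathbb{R}(\lambda)$ is a field (in particular an integral domain), we conclude that either $f-g\equiv 0$ or $f+g\equiv 0$.

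In the first case, $f\equiv g$ gives $\sum_i \frac{a_i-b_i}{\lambda-\lambda_i}\equiv 0$, and the standard partial-fractions uniqueness argument finishes it: multiply through by $(\lambda-\lambda_j)$ and let $\lambda\to\lambda_j$ (or equivalently take the residue at $\lambda_j$), which isolates $a_j-b_j=0$ for each $j$. This is exactly the conclusion we want. The second case, $f\equiv -g$, must be ruled out: here $2=f+g\equiv 0$ would force $2=0$, a contradiction — more carefully, $f(\lambda)+g(\lambda)=2+\sum_i\frac{a_i+b_i}{\lambda-\lambda_i}$, and taking $\lambda\to\infty$ (all the fractional terms tend to $0$) gives the limit $2\neq 0$, so $f+g$ cannot be the zero function. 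Hence only the first case survives and the lemma follows.

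I do not expect any genuine obstacle here; the only point requiring a little care is the passage from ``equality at all but finitely many real $\lambda$'' to ``equality as rational functions,'' which is legitimate because a nonzero rational function over $\mathbb{R}$ has only finitely many zeros, so agreeing on an infinite set forces identity. An alternative, entirely elementary route avoiding the language of rational-function fields is to clear denominators: multiply (\ref{ab2}) by $\prod_{i=1}^m(\lambda-\lambda_i)^2$ to obtain a polynomial identity valid for all real $\lambda$ (first for the cofinitely many $\lambda$ in the hypothesis, then for all $\lambda$ by continuity/polynomial identity), factor as a product of two polynomials, use that $\mathbb{R}[\lambda]$ is a domain to split into cases, discard the case with the wrong leading behavior, and read off the coefficients. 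Either way the write-up is short.
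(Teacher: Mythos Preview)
Your proof is correct but takes a slightly different route from the paper's. The paper argues pointwise: from $f(\lambda)^2=g(\lambda)^2$ one gets $f(\lambda)=\pm g(\lambda)$ for each individual $\lambda$, and then shows that the minus sign can occur for at most $m$ values of $\lambda$ by clearing denominators to obtain the degree-$m$ polynomial equation
\[
2\prod_{i=1}^m(\lambda-\lambda_i)+\sum_{i=1}^m(a_i+b_i)\prod_{j\neq i}(\lambda-\lambda_j)=0,
\]
which has at most $m$ real roots; hence the plus sign holds for all but finitely many $\lambda$, and the residue argument finishes as in your write-up. You instead pass at once to an identity of rational functions, factor $(f-g)(f+g)=0$ in the integral domain $\mathbb{R}(\lambda)$, and discard $f+g\equiv 0$ via the limit at infinity. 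Your approach is a bit more structural and sidesteps the pointwise sign bookkeeping; the paper's version is marginally more elementary in that it never appeals to the domain property, only to a root count. Either way the endgame---uniqueness of partial-fraction coefficients via residues---is identical.
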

\begin{proof}
	By Eq.~\eqref{ab2}, we have 
	$$	\left(1+\sum_{i=1}^{m}\frac{a_i}{\lambda-\lambda_i}\right)=\pm \left(1+\sum_{i=1}^{m}\frac{b_i}{\lambda-\lambda_i}\right).$$
	We claim that the sign `$\pm$' can take `$-$' for at most $m$  different values of the variable $\lambda$. Indeed, suppose $\left(1+\sum_{i=1}^{m}\frac{a_i}{\lambda-\lambda_i}\right)=- \left(1+\sum_{i=1}^{m}\frac{b_i}{\lambda-\lambda_i}\right)$. By multiplying both sides by $\prod_{i=1}^m (\lambda-\lambda_i)$ and rearranging the terms, we obtain \begin{equation}\label{pm}
		2\prod_{i=1}^m (\lambda-\lambda_i)+\sum_{i=1}^m (a_i+b_i)\prod_{j=1,j\neq i}^{m}(\lambda-\lambda_j)=0.
	\end{equation}
	Noting that the left-hand side is a polynomial of degree $m$, we find that Eq.~\eqref{pm} has at most $m$ roots. This proves the claim. 
	
	By the claim, we see that the equality $1+\sum_{i=1}^{m}\frac{a_i}{\lambda-\lambda_i}= 1+\sum_{i=1}^{m}\frac{b_i}{\lambda-\lambda_i}$, or equivalently, \begin{equation}\label{ab}
	\sum_{i=1}^{m}\frac{a_i-b_i}{\lambda-\lambda_i}=0
	\end{equation}
	 holds for all $\lambda\in \mathbb{R}$ except finite number of values. Taking $\lambda\to \lambda_i$ in Eq.~\eqref{ab}, we easily find that $a_i=b_i$. This completes the proof.
\end{proof}
\begin{proposition}\label{rst}
	Let $G$ and $H$ be two $n$-vertex graph and $b,c$ be two vectors in $\{0,1\}^n$. If (1) $G+b$ and $H+c$ are  generalized cospectral  and (2) $G$ and $H$ are also generalized cospectral, then there exists an orthogonal matrix $Q$ such that $Q^\T A(G)Q=A(H)$, $Q^\T e=e$ and $Q^\T b=c$.
\end{proposition}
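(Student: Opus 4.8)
The plan is to construct $Q$ from the spectral data of $G$ and $H$, one eigenspace at a time. Write $A=A(G)$, $A'=A(H)$ with spectral decompositions $A=\sum_{i=1}^m\lambda_iP_iP_i^\T$ and $A'=\sum_{i=1}^m\lambda_iP_i'(P_i')^\T$; since $G$ and $H$ are cospectral, the distinct eigenvalues $\lambda_i$ and their multiplicities $k_i$ coincide. For any orthogonal $Q$ with $Q^\T AQ=A'$, the columns of $Q^\T P_i$ form an orthonormal basis of $\mathcal E_{\lambda_i}(A')$, so $Q^\T P_i=P_i'R_i$ for some orthogonal $k_i\times k_i$ matrix $R_i$, whence $Q^\T=\sum_{i=1}^mP_i'R_iP_i^\T$; conversely, using $P_i^\T P_j=\delta_{ij}I_{k_i}$ and $\sum_iP_i'(P_i')^\T=I$, any matrix of this form is orthogonal and conjugates $A$ to $A'$. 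Since $Q^\T e=\sum_iP_i'R_i(P_i^\T e)$ and $Q^\T b=\sum_iP_i'R_i(P_i^\T b)$, the conditions $Q^\T e=e$ and $Q^\T b=c$ reduce to choosing, for each $i$, an orthogonal $R_i$ with $R_i(P_i^\T e)=(P_i')^\T e$ and $R_i(P_i^\T b)=(P_i')^\T c$; such an $R_i$ exists precisely when the pair $(P_i^\T e,\,P_i^\T b)$ has the same Gram matrix as $((P_i')^\T e,\,(P_i')^\T c)$ (the common Gram matrix has rank at most $k_i$, so there is no dimension obstruction), i.e.
\begin{equation*}
\|P_i^\T e\|^2=\|(P_i')^\T e\|^2,\qquad \|P_i^\T b\|^2=\|(P_i')^\T c\|^2,\qquad e^\T P_iP_i^\T b=e^\T P_i'(P_i')^\T c\qquad(1\le i\le m).
\end{equation*}
So the proposition reduces to these three families of scalar identities.

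The first two follow by comparing residues. By Corollary~\ref{gc2}, hypothesis~(2) amounts to $\chi(G;x)=\chi(H;x)$ together with $\chi(G+e;x)=\chi(H+e;x)$; feeding the latter into Lemma~\ref{basic}\,(\rmnum{1}) (with $b=e$) and cancelling the nonzero common factor $\chi(G;x)=\chi(H;x)$ gives the identity of rational functions $\sum_i\|P_i^\T e\|^2/(x-\lambda_i)=\sum_i\|(P_i')^\T e\|^2/(x-\lambda_i)$, so the residues at $x=\lambda_i$ agree, which is the first identity. Similarly hypothesis~(1) gives $\chi(G+b;x)=\chi(H+c;x)$, and the same argument via Lemma~\ref{basic}\,(\rmnum{1}) yields $\|P_i^\T b\|^2=\|(P_i')^\T c\|^2$.

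The third (cross-term) identity is the heart of the matter, and this is where Lemma~\ref{tl} comes in. Applying Corollary~\ref{gc2} once more, hypothesis~(1) also gives $\chi\big((G+b)+e^{(n+1)};x\big)=\chi\big((H+c)+e^{(n+1)};x\big)$. I would evaluate the left-hand side directly: from
\begin{equation*}
A\big((G+b)+e^{(n+1)}\big)=\begin{pmatrix}A&b&e\\ b^\T&0&1\\ e^\T&1&0\end{pmatrix},
\end{equation*}
taking the Schur complement of the block $xI-A$ gives (as rational functions in $x$)
\begin{equation*}
\chi\big((G+b)+e^{(n+1)};x\big)=\chi(G;x)\Big[(x-W_{bb})(x-W_{ee})-(1+W_{be})^2\Big],
\end{equation*}
where $W_{bb}=\sum_i\|P_i^\T b\|^2/(x-\lambda_i)$, $W_{ee}=\sum_i\|P_i^\T e\|^2/(x-\lambda_i)$, and $W_{be}=\sum_i e^\T P_iP_i^\T b/(x-\lambda_i)$. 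Writing the analogous formula for $(H+c)+e^{(n+1)}$, cancelling $\chi(G;x)=\chi(H;x)$, and substituting the already-proved equalities of the $W_{bb}$- and $W_{ee}$-sums, the non-squared terms cancel and we are left with
\begin{equation*}
\Big(1+\sum_{i=1}^m\frac{e^\T P_iP_i^\T b}{x-\lambda_i}\Big)^2=\Big(1+\sum_{i=1}^m\frac{e^\T P_i'(P_i')^\T c}{x-\lambda_i}\Big)^2 .
\end{equation*}
Lemma~\ref{tl} then forces $e^\T P_iP_i^\T b=e^\T P_i'(P_i')^\T c$ for every $i$, the third identity, completing the proof.

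I expect the obstacle to be exactly the \emph{square} $(1+W_{be})^2$ appearing in the determinant: generalized cospectrality pins down only this square, not $1+W_{be}$ itself, so the cross terms cannot be recovered as ordinary residues, and Lemma~\ref{tl} is precisely the tool designed to close that gap. A minor additional point is the existence of the $R_i$ when some of $P_i^\T e$, $P_i^\T b$ vanish or are proportional, but this is subsumed by the equality of the two $2\times2$ Gram matrices.
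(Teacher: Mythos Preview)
Your proposal is correct and follows essentially the same route as the paper: reduce to the three families of scalar identities via the spectral decompositions, obtain the two norm identities from Lemma~\ref{basic}(\rmnum{1}) and Corollary~\ref{gc2}, obtain the cross-term identity by computing $\chi\big((G+b)+e^{(n+1)};x\big)$ via the Schur complement and then invoking Lemma~\ref{tl} to strip the square, and finally assemble $Q$ eigenspace-by-eigenspace from orthogonal $R_i$ guaranteed by equality of the $2\times2$ Gram matrices. The only cosmetic difference is that you parametrize $Q^\T=\sum_iP_i'R_iP_i^\T$ directly, whereas the paper writes the same matrix in block form.
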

\begin{proof}
	Let $A=A(G)$ and $B=A(H)$. As $A$ and $B$ are cospectral, we may write the spectral decompositions of $A$ and $B$ as follows:
	\begin{equation}
A=\sum_{i=1}^m \lambda_i P_iP_i^\T \quad \text{and}\quad B=\sum_{i=1}^m \lambda_i R_iR_i^\T,
	\end{equation}
	where each $P_i$ and  $R_i$ consist of orthogonal bases  of $\mathcal{E}_{\lambda_i}(A)$ and $\mathcal{E}_{\lambda_i}(B)$, respectively.
		
	\noindent \textbf{Claim}:  $||P_i^\T b||=||R_i^\T c||$, $||P_i^\T e||=||R_i^\T e||$ and $\langle P_i^\T b, P_i^\T e\rangle=\langle R_i^\T c, R_i^\T e\rangle$ for $i=1,\ldots,m$.
	
	The first two equalities of the claim are easy consequences of Lemma \ref{basic}. Indeed, noting that $\chi(G+b;x)=\chi(H+c;x)$, it follows from  Lemma \ref{basic} (\rmnum{1}) that
	$$\chi(G;x)\left(x-\sum_{i=1}^m \frac{||P_i^\T b||^2}{x-\lambda_i}\right)=\chi(H;x)\left(x-\sum_{i=1}^m \frac{||R_i^\T c||^2}{x-\lambda_i}\right).$$
	As $\chi(G;x)=\chi(H;x)$, we must have 
		$$x-\sum_{i=1}^m \frac{||P_i^\T b||^2}{x-\lambda_i}=x-\sum_{i=1}^m \frac{||R_i^\T c||^2}{x-\lambda_i},$$
		which clearly implies $||P_i^\T b||=||R_i^\T c||$. A similar argument show that $||P_i^\T e||=||R_i^\T e||$. It remains to show the last equality $\langle P_i^\T b, P_i^\T e\rangle=\langle R_i^\T c, R_i^\T e\rangle$.
	
	Let $\hat{G}=(G+b)+e^{(n+1)}$ and $\hat{H}=(H+c)+e^{(n+1)}$. By Corollary \ref{gc2} and the first condition of this proposition, we see that  $\hat{G}$ and $\hat{H}$ are cospectral.  Note that the adjacency matrix of $\hat{G}$ is 
	\begin{equation*}
		\hat{A}=\begin{pmatrix}
			A&b&e\\
			b^\T& 0&1\\
			e^\T& 1&0
			\end{pmatrix}.
	\end{equation*}
Direct calculation shows that
\begin{align}\label{hG}
		\chi(\hat{G};x)&=\begin{vmatrix}
			\lambda I-A&-b&-e\\
			-b^\T &\lambda&-1\\
			-e^\T&-1&\lambda
		\end{vmatrix} \nonumber\\[5pt]
		&=\begin{vmatrix}\begin{pmatrix}
			\lambda I-A&-b&-e\\
			-b^\T &\lambda&-1\\
			-e^\T&-1&\lambda
		\end{pmatrix}\begin{pmatrix}
	I&(\lambda I-A)^{-1}b&(\lambda I-A)^{-1}e\\
		O &1&0\\
		O&0&1
		\end{pmatrix}\end{vmatrix}\nonumber\\[5pt]
		&=\begin{vmatrix}
			\lambda I-A&O&O\\
		-b^\T &\lambda-b^\T (\lambda I-A)^{-1}b&-1-b^\T (\lambda I-A)^{-1}e \nonumber\\
			-e^\T&-1-e^\T (\lambda I-A)^{-1}b&\lambda-e^\T (\lambda I-A)^{-1}e
		\end{vmatrix}\nonumber\\[5pt]
			&=|\lambda I-A|\begin{vmatrix}
			\lambda-\sum\limits_{i=1}^m\frac{1}{\lambda-\lambda_i}b^\T P_i P_i^\T b &-\left(1+\sum\limits_{i=1}^m \frac{1}{\lambda-\lambda_i}b^\T P_i P_i^\T e\right)\\
			-\left(1+\sum\limits_{i=1}^m \frac{1}{\lambda-\lambda_i}e^\T P_i P_i^\T b\right)&	\lambda-\sum\limits_{i=1}^m\frac{1}{\lambda-\lambda_i}e^\T P_i P_i^\T e\end{vmatrix} \nonumber\\[5pt]
			&=\chi(G;x)\begin{vmatrix}\lambda-\sum\limits_{i=1}^m\frac{||P_i^\T b||^2}{\lambda-\lambda_i}&-\left(1+\sum\limits_{i=1}^m\frac{\langle P_i^\T b,P_i^\T e\rangle}{\lambda-\lambda_i}\right)\\
			-\left(1+\sum\limits_{i=1}^m\frac{\langle P_i^\T b,P_i^\T e\rangle}{\lambda-\lambda_i}\right)&\lambda-\sum\limits_{i=1}^m\frac{||P_i^\T e||^2}{\lambda-\lambda_i}
		\end{vmatrix}.
	\end{align}
Similarly, we have 
\begin{equation}\label{hH}
	\chi(\hat{H};x)=\chi(H;x)\begin{vmatrix}\lambda-\sum\limits_{i=1}^m\frac{||R_i^\T c||^2}{\lambda-\lambda_i}&-\left(1+\sum\limits_{i=1}^m\frac{\langle R_i^\T c,R_i^\T e\rangle}{\lambda-\lambda_i}\right)\\
		-\left(1+\sum\limits_{i=1}^m\frac{\langle R_i^\T c,R_i^\T e\rangle}{\lambda-\lambda_i}\right)&\lambda-\sum\limits_{i=1}^m\frac{||R_i^\T e||^2}{\lambda-\lambda_i}
		\end{vmatrix}.
\end{equation}
As $\chi(\hat{G};x)=\chi(\hat{H};x)$, $\chi(G;x)=\chi(H;x)$, $||P_i^\T b||=||R_i^\T c||$ and $||P_i^\T e||=||R_i^\T e||$, combining Eqs. \eqref{hG} and \eqref{hH} leads to 
\begin{equation*}
	\left(1+\sum\limits_{i=1}^m\frac{\langle P_i^\T b,P_i^\T e\rangle}{\lambda-\lambda_i}\right)^2=\left(1+\sum\limits_{i=1}^m\frac{\langle R_i^\T c,R_i^\T e\rangle}{\lambda-\lambda_i}\right)^2,
\end{equation*}
It follows from Lemma \ref{tl} that $\langle P_i^\T b, P_i^\T e\rangle=\langle R_i^\T c, R_i^\T e\rangle$ for $i=1,\ldots,m$. This completes the proof of the Claim.

For each $i\in\{1,\dots,m\}$, let $k_i$ be the multiplicity of the eigenvalue $\lambda_i$ of $A$ (or $B$). Note that $P_i^\T b, P_i^\T e, R_i^\T c, R_i^\T e\in \mathbb{R}^{k_i}$. By the Claim, we see that for each $i$ there exists an orthogonal matrix $Q_i$ of order $k_i$ such that $Q_i(P_i^\T b)=R_i^\T c$ and $Q_i(P_i^\T e)=R_i^\T e$, see e.g. \cite[Theorem 7.3.11]{horn2013}.  Written  in the form of  block matrices, we have
\begin{equation}\label{block}
	\begin{pmatrix}
		Q_1 P_1^\T\\
		\vdots\\
		Q_mP_m^\T
	\end{pmatrix}(b,e)=\begin{pmatrix}
	R_1^\T\\
	\vdots\\
R_m^\T
	\end{pmatrix}(c,e).
\end{equation} 
Define 
\begin{equation}\label{defQ}
	Q=(P_1Q_1^\T,\ldots,P_mQ_m^\T)\begin{pmatrix}
		R_1^\T\\ \vdots\\R_m^\T
	\end{pmatrix}.
\end{equation}
It is easy to see that both matrices on the right-hand side of Eq.~\eqref{defQ} are orthogonal, implying $Q$ is an orthogonal matrix. By Eq.~\eqref{block}, we have $Q^\T (b,e)=(c,e)$. Finally, noting that 
\begin{equation*}
(P_1Q_1^\T,\ldots,P_mQ_m^\T)^\T A(P_1Q_1^\T,\ldots, P_m Q_m^\T)=\diag(\lambda_1I_{k_1},\ldots, \lambda_{m}I_{k_m})
\end{equation*}
and 
\begin{equation*}
	(R_1,\ldots,R_m)^\T B (R_1,\ldots, R_m)=\diag(\lambda_1I_{k_1},\ldots, \lambda_{m}I_{k_m})
\end{equation*}
we obtain
\begin{eqnarray*}
	Q^\T A Q&=&(R_1,\ldots,R_m)(P_1Q_1^\T,\ldots,P_mQ_m^\T)^\T A(P_1Q_1^\T,\ldots, P_m Q_m^\T)\begin{pmatrix}
		R_1^\T\\ \vdots\\R_m^\T
	\end{pmatrix}\\
	&=&(R_1,\ldots,R_m)\diag(\lambda_1I_{k_1},\ldots, \lambda_{m}I_{k_m})\begin{pmatrix}
		R_1^\T\\ \vdots\\R_m^\T
	\end{pmatrix}\\
	&=&B.
\end{eqnarray*}
This completes the proof of this proposition.	
\end{proof}
\begin{remark}\normalfont{
	Proposition \ref{rst} was originally reported as Theorem 12 in \cite{farrugia}. In that paper, Farrugia essentially proved that there exist two orthogonal matrices $Q_1$ and $Q_2$ such that $Q_1^\T A(G)Q_1=Q_2^\T A(G)Q_2=A(H)$, $Q_1^\T e=e$ and $Q_2^\T b=c$. But  his claim that the two matrices $Q_1$ and $Q_2$ can be chosen equal requires more justification. The key of the current proof is the newly established equality $\langle P_i^\T b,P_i^\T e\rangle=\langle R_i^\T c,R_i^\T e\rangle$, which guarantees the realizability of $Q_1=Q_2$.}
\end{remark}
\noindent{\textbf{Proof of Theorem \ref{main}} The implication (\rmnum{2})$\implies$(\rmnum{1}) is straightforward; we only prove the other direction. Let $G_1=G-u_n$ and $H_1=H-v_n$. Then the adjacency matrices of $G$ and $H$ have the form
	\begin{equation*}
		A(G)=\begin{pmatrix}A(G_1)&b\\b^\T&0\end{pmatrix}
		\quad \text{and}\quad 	A(H)=\begin{pmatrix}A(H_1)&c\\c^\T&0\end{pmatrix}.
		\end{equation*}
Now the condition (\rmnum{1}) can be restated as that both the pair $G_1+b$, $H_1+c$ and the pair $G_1$, $H_1$ are generalized cospectral graphs. Noting that $G_1$ and $H_1$ contain $(n-1)$ vertices, it follows from Proposition \ref{rst} that there exists an orthogonal matrix $Q$ such that
$Q^\T A(G_1)Q=A(H_1)$ $Q^\T e^{(n-1)}=e^{(n-1)}$ and $Q^\T b =c$. Direct calculation shows that
\begin{equation*}
	\begin{pmatrix}
		Q^\T&O\\
		O&1
	\end{pmatrix}\begin{pmatrix}A(G_1)&b\\b^\T& 0\end{pmatrix}
		\begin{pmatrix}
		Q&O\\
		O&1
	\end{pmatrix}=\begin{pmatrix}Q_1^\T A(G_1)Q&Q^\T b\\b^\T Q&0\end{pmatrix}=\begin{pmatrix} A(H_1)&c\\c^\T &0\end{pmatrix}.
	\end{equation*}
As $Q$ is a regular orthogonal matrix, the block matrix 	$\begin{pmatrix}
	Q^\T&O\\
	O&1
\end{pmatrix}$ is also a regular orthogonal matrix. This completes the proof of Theorem \ref{main}.
\section{A new condition for graph reconstructibility}
Given a graph $G$ with vertex set $\{u_1,\ldots,u_n\}$, the \emph{deck} of $G$, denoted by $\mathcal{D}(G)$, is the multiset of its vertex-deleted (unlabeled) subgraphs $G-u_i$ for $i=1,\ldots,n$.  A graph $H$ is called a \emph{reconstruction} of $G$ if $\mathcal{D}(H)=\mathcal{D}(G)$. If every reconstruction of $G$ is isomorphic to
$G$, then $G$ is said to be \emph{reconstructible}. The Reconstruction Conjecture (also called Ulam Conjecture or Kelly-Ulam Conjecture) claims that every graph with at least three vertices  is reconstructible. 

An eigenvalue $\lambda$ of a graph $G$ is called a \emph{main eigenvalue} if the corresponding eigenspace is not orthogonal to the all-ones vector $e$. An $n$-vertex graph $G$ is \emph{controllable} (resp. \emph{almost controllable}) if $G$ has $n$ (resp. $n-1$) main eigenvalues. In a short note, Hong \cite{hong} obtained the following condition for reconstructibility,  based on the work of Godsil-McKay \cite{godsil}.
\begin{theorem}[\cite{hong}]\label{cons}
If there exists a vertex-deleted subgraph $G-u_i$ that is controllable, then $G$ is reconstructible.
\end{theorem}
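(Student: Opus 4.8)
The plan is to prove that every reconstruction $H$ of $G$ is in fact isomorphic to $G$. First I would recall two classical facts from reconstruction theory: the characteristic polynomial $\chi(G;x)$ is reconstructible from the deck $\mathcal{D}(G)$, and the deck of the complement, $\mathcal{D}(\overline G)=\{\overline{G-u_i}:1\le i\le n\}$, is itself determined by $\mathcal{D}(G)$, so that $\chi(\overline G;x)$ is reconstructible as well. Since $\mathcal{D}(G)=\mathcal{D}(H)$, it follows that $\chi(G;x)=\chi(H;x)$ and $\chi(\overline G;x)=\chi(\overline H;x)$; in other words, $G$ and $H$ are generalized cospectral.

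Next I would single out the controllable card in both decks. Because $G-u_i$ is controllable and $\mathcal{D}(G)=\mathcal{D}(H)$, there is a vertex $v_j$ of $H$ with $H-v_j\cong G-u_i$. Fixing an isomorphism between these two cards, I relabel $G$ and $H$ so that $u_i$ and $v_j$ become the last vertices and the two cards have one and the same adjacency matrix $A_1$; thus
\[
A(G)=\begin{pmatrix}A_1&b\\ b^\T&0\end{pmatrix},\qquad A(H)=\begin{pmatrix}A_1&c\\ c^\T&0\end{pmatrix}
\]
for some $b,c\in\{0,1\}^{n-1}$. By the first step $G$ and $H$ are generalized cospectral, and $G-u_n=H-v_n$ by construction, so all the cospectralities in condition \textup{(\rmnum{1})} of Theorem~\ref{main} hold for $(G,u_n)$ and $(H,v_n)$ (equivalently, the hypotheses of Proposition~\ref{rst} hold with $G_1=H_1$). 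Hence there is a regular orthogonal matrix $Q$ of order $n-1$ with $Q^\T A_1 Q=A_1$ and $Q^\T b=c$.

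Finally I would use controllability to force $Q=I_{n-1}$. Regularity of $Q$ gives $Qe=e$, and hence $Q^\T e=e$; from $Q^\T A_1 Q=A_1$ and orthogonality, $Q$ commutes with $A_1$, so $Q(A_1^{k}e)=A_1^{k}(Qe)=A_1^{k}e$ for every $k\ge0$. Thus $Q$ fixes each column of the walk matrix $(e,\ A_1e,\ \dots,\ A_1^{\,n-2}e)$ of $G-u_i$, which is nonsingular precisely because $G-u_i$ is controllable; therefore $Q=I_{n-1}$. Then $b=Q^\T b=c$, so $A(G)=A(H)$ and $H\cong G$. Since $H$ was an arbitrary reconstruction of $G$, this proves $G$ is reconstructible.

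The real content lies in the last step: controllability of a single card is exactly the hypothesis needed to collapse the orthogonal ``gauge'' freedom left open by Theorem~\ref{main}, turning a regular orthogonal equivalence into an honest isomorphism. I expect the most error-prone part to be the labeling bookkeeping in the middle step --- arranging that the two copies of the controllable card literally coincide as matrices and that the hypothesis of Theorem~\ref{main} is met verbatim. This is clean because a controllable graph is asymmetric (an automorphism commutes with $A$ and fixes $e$, hence fixes the nonsingular walk matrix and must be the identity), so the identification of the two cards is unambiguous. The only ingredient I would import rather than prove is the classical reconstructibility of $\chi(G;x)$ and of $\chi(\overline G;x)$ from the deck.
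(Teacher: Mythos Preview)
Your proof is correct. Note, however, that the paper does not actually prove Theorem~\ref{cons}; it merely quotes it from Hong as background for the new Theorem~\ref{ac}. That said, your argument is precisely the controllable-case specialization of the paper's proof of Theorem~\ref{ac}: both start from the fact that a reconstruction $H$ is generalized cospectral with $G$ (the paper cites \cite{tutte,hagos} for this, which packages the $\chi(G;x)$ and $\chi(\overline G;x)$ reconstructibility you invoke), relabel so that the distinguished card appears identically in $A(G)$ and $A(H)$, apply Theorem~\ref{main} to obtain $Q\in\textup{RO}(n-1)$ with $Q^\T A_1Q=A_1$ and $Q^\T b=c$, and then use the structure of the card to pin down $Q$. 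The only difference is in this last step: in the almost controllable case the paper appeals to Theorem~\ref{twosol} to get $Q\in\{I,Q_0\}$, whereas in your controllable case the nonsingular walk matrix forces $Q=I$ outright, exactly as you argue.
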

The  aim of this section is to extend Theorem \ref{cons} to  almost controllable subgraphs under some mild restrictions. The main tool is Theorem \ref{main}, together with some basic results on almost controllable graphs.

For an $n$-vertex graph $G$ with adjacency matrix $A$, the \emph{walk matrix} of $G$ is 
\begin{equation*}
W(G):=[e,Ae,\ldots,A^{n-1}e].
\end{equation*}
It is known that the number of main eigenvalues of $G$ equals the rank of $W(G)$. We use $\mathcal{H}_n$ to denote the set of all almost controllable graphs on $n$ vertices. Clearly, if  $G\in \mathcal{H}_n$ then $\rank\, W(G)=n-1$. For a graph $G\in \mathcal{H}_n$, the following Householder matrix associated with $G$ is crucial.
\begin{definition}[\cite{qiu}]\label{defQ0}\normalfont{
	For $G\in \mathcal{H}_n$, define
	$$Q_0=Q_0(G)=I_n-\frac{2\xi\xi^\T}{\xi^\T\xi},$$
	where $\xi$ is a unique (up to the sign) nonzero integral vector $\xi=(a_1,a_2,\ldots,a_n)$ satisfying $W^\T(G) \xi=0$ and $\gcd(a_1, a_2,\ldots,a_n) =1$.}
\end{definition}
Clearly, $Q_0$ is a symmetric orthogonal matrix. Moreover, from the equation $W^\T(G)\xi=0$, we easily see that  $\xi^\T e=0$ and hence $Qe=e$, i.e., the orthogonal matrix $Q_0(G)$ is regular. Let $\textup{RO}(n)$ denote the group of regular orthogonal matrices of order $n$. Note that $\textup{RO}(n)$ contains the  group of $n\times n$ permutation matrices as a subgroup. The importance of $Q_0(G)$ can be described in the following theorem.
\begin{theorem}[\cite{wang,qiu}]\label{twosol}
For $G\in \mathcal{H}_n$,	the solution set of the matrix equation $Q^\T A(G)Q=A(G)$ with variable $Q\in \textup{RO}(n)$ is exactly $\{I_n,Q_0(G)\}$.
\end{theorem}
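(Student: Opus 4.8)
The plan is to analyze the matrix equation $Q^\T A(G)Q = A(G)$ for $G \in \mathcal{H}_n$ by passing to the walk matrix. The key observation is that any $Q \in \textup{RO}(n)$ satisfying this equation must send walks to walks: since $Qe = e$, a short induction shows $Q^\T A^k e = A^k e$ for all $k \geq 0$, hence $Q^\T W(G) = W(G)$, i.e. $(Q^\T - I_n) W(G) = O$. Because $\rank W(G) = n-1$, the column space of $W(G)$ is a hyperplane, and so every row of $Q^\T - I_n$ lies in the orthogonal complement of $\col W(G)$, which is the one-dimensional space spanned by $\xi$ (the unique primitive integral null vector of $W^\T(G)$). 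Thus $Q^\T - I_n = v\xi^\T$ for some vector $v \in \mathbb{R}^n$, equivalently $Q = I_n + \xi v^\T$.

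Next I would pin down $v$ using orthogonality of $Q$. From $Q^\T Q = I_n$ with $Q = I_n + \xi v^\T$ one gets $v\xi^\T + \xi v^\T + (v^\T v)\,\xi\xi^\T = O$ (using $\xi^\T \xi$ as a scalar, one would need to be a little careful, but writing $c = \xi^\T\xi$ and expanding $Q^\T Q = (I_n + v\xi^\T)(I_n + \xi v^\T) = I_n + v\xi^\T + \xi v^\T + c\, v v^\T$... let me instead expand with $Q = I_n + \xi v^\T$ directly: $Q^\T Q = (I_n + v\xi^\T)(I_n + \xi v^\T) = I_n + \xi v^\T + v\xi^\T + (v^\T\xi)\, v\xi^\T$). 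Setting this equal to $I_n$ and testing against $\xi$ from the right, one finds that either $v = 0$, giving $Q = I_n$, or $v$ is a nonzero scalar multiple of $\xi$; writing $v = t\xi$ and substituting forces $2t + t^2 c = 0$, so $t = -2/c = -2/(\xi^\T\xi)$, which yields exactly $Q = I_n - \frac{2\xi\xi^\T}{\xi^\T\xi} = Q_0(G)$. Conversely, both $I_n$ and $Q_0(G)$ are regular orthogonal (the latter because $\xi^\T e = 0$, as noted just before the theorem) and plainly satisfy $Q^\T A Q = A$ — for $Q_0$ this follows since $Q_0$ acts as identity on $\col W(G)$, which by the Cayley–Hamilton theorem is an $A$-invariant subspace equal to the span of the eigenvectors having a main eigenvalue, and $A$ is already block-diagonal with respect to the decomposition $\col W(G) \oplus \langle \xi\rangle$.

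The main obstacle is establishing that $A(G)$ is actually invariant under the splitting $\mathbb{R}^n = \col W(G) \oplus \langle\xi\rangle$, i.e. that $A\xi$ is again a scalar multiple of $\xi$ (equivalently that $\xi$ spans the non-main part of the spectrum, a one-dimensional eigenspace). This is where the hypothesis $G \in \mathcal{H}_n$ — exactly $n-1$ main eigenvalues — is essential: it forces the orthogonal complement of $\col W(G)$ to be spanned by a single non-main eigenvector. One cleanly derives this from the fact that $\col W(G)$ is the span of the projections of $e$ onto the main eigenspaces, together with the characterization of main eigenvalues; alternatively it can be cited from the basic theory of (almost) controllable graphs. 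Once this invariance is in hand, the commutation $Q_0 A = A Q_0$ is immediate and the verification in the previous paragraph goes through, completing both inclusions.
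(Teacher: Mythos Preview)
The paper does not give its own proof of this theorem; it is quoted as a known result from the cited references and then used as a black box in the proof of Theorem~\ref{ac}. So there is nothing in the present paper to compare against.

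That said, your proposal is a correct self-contained argument. The chain $Q^\T A Q=A,\ Qe=e \Rightarrow Q^\T W(G)=W(G) \Rightarrow Q=I_n+\xi v^\T$ is sound, and the orthogonality computation does force $v\in\{0,\,-\tfrac{2}{\xi^\T\xi}\xi\}$ (testing $\xi v^\T+v\xi^\T+(\xi^\T\xi)vv^\T=0$ against $\xi$ shows $v\parallel\xi$, then the scalar equation $2t+(\xi^\T\xi)t^2=0$ finishes it). For the reverse inclusion, the clean justification is exactly the one you give at the end: $\col W(G)$ is $A$-invariant by Cayley--Hamilton, hence so is its orthogonal complement $\langle\xi\rangle$ because $A$ is symmetric, so $\xi$ is an eigenvector of $A$ and $Q_0$ commutes with $A$.

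One minor inaccuracy worth cleaning up: $\col W(G)$ need not equal ``the span of the eigenvectors having a main eigenvalue.'' For $G\in\mathcal{H}_n$ it is possible that all $n-1$ distinct eigenvalues are main and one of them has multiplicity two; in that case $\col W(G)$ meets that two-dimensional eigenspace only in the line through $P_iP_i^\T e$, while $\xi$ lies in the same eigenspace. This does not damage your proof, since you only use the $A$-invariance of $\col W(G)$, not the stronger (false) description; but the phrase should be dropped or corrected.
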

By Theorem \ref{twosol}, we see that graphs in $\mathcal{H}_n$ can be naturally partitioned into two subsets $\mathcal{H}_n^s$ and  $\mathcal{H}_n^a$, where $\mathcal{H}_n^s$ collects graphs $G$ in $\mathcal{H}_n$ for which $Q_0$ is a permutation matrix (a nontrivial automorphism of $G$), and $\mathcal{H}_n^a$ collects the asymmetric ones.  Now we can state and prove the main result of this section.
\begin{theorem}\label{ac}
	Let $n\ge 3$ and $G$ be an $n$-vertex graph with vertex set $\{u_1,\ldots,u_n\}$. Then $G$ is reconstructible if there exists a vertex-deleted subgraph, say $G-u_n$, satisfying either of the following two conditions:
	
\noindent \textup{(\rmnum{1})} $G-u_n\in \mathcal{H}_{n-1}^s$,

 \noindent \textup{(\rmnum{2})} $G-u_n\in   \mathcal{H}_{n-1}^a$ but $Q_0b\not\in \{0,1\}^{n-1}\setminus \{b\}$, where $Q_0=Q_0(G-u_n)$ and $b\in \{0,1\}^{n-1}$ is the indicator vector of the neighborhood of  $u_n$ in $G$.
\end{theorem}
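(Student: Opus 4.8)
The plan is to use Theorem \ref{main} to convert the combinatorial reconstruction problem into a statement about regular orthogonal matrices, and then invoke Theorem \ref{twosol} to pin those matrices down. Suppose $H$ is a reconstruction of $G$. Since $G-u_n \in \mathcal{H}_{n-1}$ is almost controllable, and the generalized spectrum is reconstructible from the deck (this follows from Tutte's results, or can be argued directly: $\chi(G;x)$ and the $\chi(G-u_i;x)$ are deck-reconstructible, and $\chi(\overline{G};x)$ likewise), I would first extract from $\mathcal{D}(H)=\mathcal{D}(G)$ a vertex $v$ of $H$ with $H-v \cong G-u_n$, and argue that $(G,u_n)$ and $(H,v)$ are generalized cospectral in the sense of Theorem \ref{main}(\rmnum{1}). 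Concretely: $G$ and $H$ are cospectral with cospectral complements because these polynomials are reconstructible; and $G-u_n \cong H-v$ gives the remaining two cospectralities for free. Hence by Theorem \ref{main} there is a regular orthogonal matrix $\left(\begin{smallmatrix} Q & O \\ O & 1\end{smallmatrix}\right)$ conjugating $A(G)$ to $A(H)$, which — after writing $A(G)=\left(\begin{smallmatrix} A(G-u_n) & b \\ b^\T & 0\end{smallmatrix}\right)$ and $A(H)=\left(\begin{smallmatrix} A(G-u_n) & c \\ c^\T & 0\end{smallmatrix}\right)$ with $c$ the neighborhood indicator of $v$ — unpacks to the three equations $Q^\T A(G-u_n) Q = A(G-u_n)$, $Q^\T e = e$, and $Q^\T b = c$.

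Next I would apply Theorem \ref{twosol} to the first two equations: since $G-u_n \in \mathcal{H}_{n-1}$, we get $Q \in \{I_{n-1}, Q_0\}$ with $Q_0 = Q_0(G-u_n)$. If $Q = I_{n-1}$ then $c = b$, so $A(H) = A(G)$ and $H \cong G$, as desired. The only remaining case is $Q = Q_0$, giving $c = Q_0 b$. Here the two hypotheses of the theorem do their work. Under (\rmnum{1}), $G-u_n \in \mathcal{H}_{n-1}^s$, so $Q_0$ is a permutation matrix $P$; then $A(H) = \left(\begin{smallmatrix} P^\T A(G-u_n) P & Pb \\ (Pb)^\T & 0\end{smallmatrix}\right)$, and since $P^\T A(G-u_n)P = A(G-u_n)$ one checks directly that $H$ is obtained from $G$ by the relabeling $P \oplus 1$ of the vertex set, hence $H \cong G$. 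Under (\rmnum{2}), the hypothesis $Q_0 b \notin \{0,1\}^{n-1}\setminus\{b\}$ forces $c = Q_0 b$ to either fail to be a $(0,1)$-vector — impossible, since $c$ is an honest neighborhood indicator of $v$ in $H$ — or to equal $b$, in which case again $A(H) = A(G)$ and $H \cong G$.

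The main subtlety is the very first step: justifying that whenever $\mathcal{D}(H) = \mathcal{D}(G)$ one may select a common vertex-deleted subgraph $G-u_n \cong H-v$ with $G-u_n \in \mathcal{H}_{n-1}$, \emph{and} that $G,\overline{G}$ are cospectral with $H,\overline{H}$. The cospectrality of $G$ and $H$ is classical (Kelly's lemma / Tutte), and $\chi(\overline{G};x)$ is determined by $\chi(G;x)$ together with the polynomials $\chi(G-u_i;x)$ via the standard identity $\chi(\overline{G};x) = (-1)^n\sum_{\text{something}}$ — here I would cite the known fact that the characteristic polynomial of the complement is reconstructible, or derive it from Lemma \ref{basic}(\rmnum{2}) and the reconstructibility of $\sum_i \chi(G-u_i;x)$. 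Matching up the almost-controllable card $G-u_n$ with a card of $H$ is immediate once we know $\mathcal{D}(H)=\mathcal{D}(G)$ as multisets of unlabeled graphs. Everything after that is bookkeeping with the block structure and two applications of previously established theorems, so the write-up should be short.
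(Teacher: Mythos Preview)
Your proposal is correct and follows essentially the same route as the paper: invoke the reconstructibility of the generalized spectrum together with the matching card $H-v\cong G-u_n$ to apply Theorem~\ref{main}, then use Theorem~\ref{twosol} to force $Q\in\{I_{n-1},Q_0\}$ and dispatch the two cases exactly as you outline. The paper's write-up is terser (it simply cites the generalized cospectrality of reconstructions as known and handles case~(\rmnum{2}) by first assuming $c\neq b$), but the logical skeleton is identical.
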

\begin{proof}
Let $H$ be any reconstruction of $G$. It is known that $G$ and $H$ are generalized cospectral, see \cite{tutte} or \cite{hagos}. Let $A=A(G-u_n)$. Then the adjacency matrix of $G$ can be written as 
$$A(G)=\begin{pmatrix}
	A&b\\
	b^\T&0
\end{pmatrix}.
$$
By relabeling vertices in $H$ appropriately, we may assume the adjacency matrix of $H$ has the form
$$A(H)=\begin{pmatrix}
	A&c\\
	c^\T &0
\end{pmatrix}.
$$
It follows from Theorem \ref{main} that there exists a regular orthogonal matrix of the form $\begin{pmatrix} Q&O\\O&1\end{pmatrix}$ such that
\begin{equation}\label{qabc}
	\begin{pmatrix}
		Q^\T &O\\
		O&1
	\end{pmatrix}\begin{pmatrix}A&b\\b^\T&0\end{pmatrix}
	\begin{pmatrix}
	Q &O\\
	O&1
	\end{pmatrix}=\begin{pmatrix}A&c\\c^\T&0\end{pmatrix}.
\end{equation}
This means that  $Q\in \textup{RO}(n-1)$, $Q^\T A Q=A$, and $Q^\T b=c$. Let $Q_0=Q_0(G-u_n)$ be the Household matrix as given in Definition \ref{defQ0}.   If $G-u_n\in \mathcal{H}_{n-1}^s$, then by Theorem \ref{twosol} we have $Q\in \{I_{n-1},Q_0\}$, where $Q_0$ is a permutation matrix.  But this means $G$ and $H$ are isomorphic by Eq.~\eqref{qabc}. Now consider the other case  $G-u_n\in \mathcal{H}_{n-1}^a$. We may assume $c\neq b$ since otherwise $H=G$ and we are done. Thus, $c\in \{0,1\}^{n-1}\setminus \{b\}$ and hence $Q_0b\neq c$ by the condition of this theorem.  Consequently, $Q\neq Q_0$ and hence we must have $Q=I_{n-1}$ by Theorem \ref{twosol}.  This again forces $H=G$ by Eq.~\eqref{qabc}. Either case implies that $H$ is isomorphic to $G$. Thus $G$ is reconstructible, completing the proof of Theorem \ref{ac}.
\end{proof} 
 \section*{Acknowledgments}
This work is partially supported by the National Natural Science Foundation of China (Grant No. 12001006).


\begin{thebibliography}{99}
	
\bibitem{crs}	D.~Cvetkovi\'{c}, P. Rowlinson, S. Simi\'{c}, An Introduction to the Theory of Graph Spectra, Cambridge University Press, Cambridge, UK, 2010.
	
\bibitem{farrugia}	A.~Farrugia, The overgraphs of 	generalized cospectral controllable graphs,  Electron. J. Combin. 26(1) (2019) \#P1.14.

\bibitem{godsil} C.~D.~Godsil, B.~D.~McKay, Spectral conditions for the reconstructibility of a graph, J. Combin. Theory, Ser. B 30 (1981) 285--289.
	
\bibitem{horn2013} R.~A.~Horn, C.~R.~Johnson, Matrix Analysis (2nd ed.). Cambridge University Press, 2013.	

\bibitem{hagos} E.~M.~Hagos, The characteristic polynomial of a graph is reconstructible from the characteristic polynomials of its vertex-deleted subgraphs and their complements, Electron. J. Combin. 7 (2000) \#R12.

\bibitem{hong} Y. Hong, An eigenvector condition for reconstructibility, J. Combin. Theory, Ser. B 32 (1982) 245--256.


\bibitem{johnson1980JCTB} C.~R.~Johnson, M.~Newman, A note on cospectral graphs, J. Combin. Theory, Ser. B 28 (1980) 96--103.	

\bibitem{qiu} L.~Qiu, W.~Wang, W.~Wang, H.~Zhang, A new criterion for almost controllable graphs being determined by their generalized spectra,  Discrete Math. 345 (2022) 113060.

\bibitem{tutte} W.~T.~Tutte, All the king's horses: a guide to reconstruction, in Graph Theory and Related Topics, Academic Press, 1979, pp. 15--33.

\bibitem{wang} W.~Wang, F.~Liu, W.~Wang, Generalized spectral characterizations of almost controllable graphs
European J. Combin. 96 (2021) 103348.


\bibitem{liu2020}F.~Liu, W.~Wang, T.~Yu, H.-J.~Laic, Generalized cospectral graphs with and without Hamiltonian cycles, Linear Algebra Appl. 585 (2020) 199--208.

\bibitem{schwenk} A.~J.~Schwenk, R.~J.~Wilson, On the eigenvalues of a graph, in: Selected Topics in Graph Theory, vol.1, Academic Press, 1978, pp. 307--336.
\end{thebibliography}
\end{document}